\newtheorem{theorem}{Theorem}[section]
\newtheorem{lemma}[theorem]{Lemma}
\newtheorem{corollary}[theorem]{Corollary}
\newtheorem{proposition}[theorem]{Proposition}
\newtheorem{claim}[theorem]{Claim}
\newtheorem{notation}{Notation}
\theoremstyle{definition}
\newtheorem{definition}[theorem]{Definition}
\theoremstyle{remark}
\newtheorem{remark}[theorem]{Remark}
\numberwithin{equation}{section}
\newcommand{\lcb}{\left [}
\newcommand{\rcb}{\right ]}
\newcommand{\te}{\rightarrow}
\begin{document}

\title[Schiffer's conjecture]{Some results on the Schiffer's conjecture in $R^{2}$}

\author{Jian Deng}
\address{CEMA, Central University of Finance and
Economics, Beijing, P.R.China, 100085}
\email{jdeng@fudan.edu.cn}

\subjclass[2000]{Primary 35N05, 35N25; Secondary 35J25}

\date{\today}


\keywords{Schiffer's conjecture, Pompeiu problem, symmetry}

\begin{abstract}
Let $\Omega$ be an open, bounded domain in $R^2$ with connected and $C^{\infty}$ boundary, and $\omega$ a solution of
\begin{eqnarray}
  -\triangle \omega &=& \mu \omega \\
  \frac{\partial \omega}{\partial n}|_{\partial \Omega} &=& 0\\
  \omega|_{\partial \Omega} &=& const \neq 0
\end{eqnarray}

\noindent for some $\mu > 0$. Denoting $0 = \mu_1(\Omega) < \mu_2(\Omega) \le \dots$ the set of all Neumann eigenvalues for the Laplacian on $\Omega$. We show that 1) if $\mu < \mu_8(\Omega)$; or 2) if $\Omega$ is strictly convex and centrally symmetric, $\mu < \mu_{13}(\Omega)$, then $\Omega$ must be a disk.
\end{abstract}

\maketitle


\section{Introduction}

Schiffer's conjecture (cf.Yau \cite{Yau}) is a long standing problem in spectral theory related to the Neumann eigenvalues of the Laplace operator. It is stated as follows:

Let $\Omega \subset R^2$ be a bounded domain. Does the existence of a nontrivial solution $u$ of the
over-determined Neumann eigenvalue problem

\begin{eqnarray}\label{pomp}
  -\triangle \omega &=& \mu \omega, \hspace{.1in}¡¡\mu > 0 \\ \label{pomp2}
  \frac{\partial \omega}{\partial n}|_{\partial \Omega} &=& 0\\ \label{pomp3}
  \omega|_{\partial \Omega} &=& const \neq 0
\end{eqnarray}

\noindent imply that $\Omega$ is a ball?

This problem is closely related to the Pompeiu problem in integral geometry. A domain $\Omega \subset R^2$ is said to
have the \emph{Pompeiu property} if and only if the only continuous function $f$ on $R^2$ for
which $\int_{\sigma(\Omega)} f(x,y) dxdy = 0$ for every rigid motion $\sigma$ of $R^2$ is the function $f \equiv 0$.
The connection between the Schiffer's conjecture and the Pompeiu property of domain $\Omega$ was established in (\cite{BST}), by showing that the failure of the Pompeiu property is equivalent to the existence of a nontrivial solution of (\ref{pomp}-\ref{pomp3}). Another remarkable result concerning the regularity of boundary of $\Omega$ was given in 1981 by Williams (\cite{W}). He proved that if a bounded Lipschitz domain $\Omega \subset R^n$ has a
connected boundary $\partial \Omega$, and if $\Omega$ fails to have the Pompeiu property, then $\partial \Omega$ is real analytic. Also, Berenstein (\cite{B}) proved that in $R^2$ the disk can be characterized
as the only simply-connected domain with $C^{2, \eta}$ boundary for which there exist infinitely many
solutions that solve (\ref{pomp}-\ref{pomp3}), and in (\cite{BY}) it was shown that for the system (\ref{pomp}-\ref{pomp3}) if $\mu = \mu_2(\Omega)$, the first positive Neumann eigenvalue of the Laplacian, then $\Omega$ is a ball. In (\cite{E1}, \cite{E2}) Ebenfelt considered the case where the boundary $\partial \Omega$ can be characterized as the image of a rational map of unit disk, among others, and he showed that $\Omega$ then must be a disk. The reader is referred to the beautiful survey of Zalcman (\cite{Z}) for an extensive exposition on the current state of the Pompeiu problem.

\bigskip


Our approach to Schiffer's conjecture is to estimate the dimension of the subspace of $H^1(\Omega)$ on which the bilinear form $$B(\phi, \psi; \mu) = \int\int_{\Omega} \frac{\partial \phi}{\partial x} \cdot  \frac{\partial \bar \psi}{\partial x} + \frac{\partial \phi}{\partial y} \cdot  \frac{\partial \bar\psi}{\partial y}  - \mu \phi \cdot \bar\psi \hspace{.1in} dxdy, \hspace{.1in} \phi, \psi \in H^1(\Omega)$$  is semi-negative definite. The subspace will be provided by the functions induced by symmetry. Specifically, we note that if $u$ is a smooth solution of $-\triangle v = \mu v$, then $\frac{\partial u}{\partial x}, \frac{\partial u}{\partial y}$ and $\mathbf{R}u$ are also solutions of $-\triangle v = \mu v$, where $\mathbf{R} = -y\frac{\partial}{\partial x} + x \frac{\partial}{\partial y}$ is the infinitesimal generator of action of rotation. The subspace we are interested in is given by applying $\frac{\partial}{\partial x}, \frac{\partial}{\partial y}$, and $\mathbf{R}$ repetitively on the solution $\omega$ of (\ref{pomp}-\ref{pomp3}) and then considering the linear combination of these functions.

This idea of using symmetry induced functions has been used successfully in comparing the relative magnitude of Dirichlet and Neumann eigenvalues of the Laplacian, see for example  Aviles (\cite{A}) or Levine and Weinberger (\cite{LW}). The novel part of our approach lies in an observation about the correspondence between the weak solution $u \in H^1(\Omega)$ of $-\triangle u = \mu u$ and its boundary data $(u|_{\partial \Omega}, \frac{\partial u}{\partial n}|_{\partial \Omega})$. As explained in (\cite{DJ}), if we denote the set of all weak solutions $u \in H^1(\Omega)$ which
satisfy (\ref{pomp}) as $K_{\mu}$, then the
well-known Green's formula,

\begin{equation}\label{green}
\int\int_{\Omega} \triangle v \cdot u  -  \triangle u \cdot v
\hspace{.1in} dxdy = \int_{\partial \Omega} \frac{\partial v}{\partial
n} \cdot u - \frac{\partial u}{\partial n} \cdot v \hspace{.1in} ds,
\end{equation}

\noindent  offers a connection between
the set of weak solutions $u$ of (\ref{pomp}) in $K_{\mu}$
and their boundary data $(u|_{\partial \Omega},
\frac{\partial u}{\partial n}|_{\partial \Omega})$ in the phase
space $H \stackrel{def}= H^{\frac{1}{2}}(\partial
\Omega) \times H^{-\frac{1}{2}}(\partial \Omega).$  This also prompts the following trace map on
$K_{\mu}$ via
$$T(u) = (u|_{\partial
\Omega}, \frac{\partial u}{\partial n}|_{\partial \Omega}),
\hspace{.1in} u \in K_{\mu}.$$

\noindent The trace map $T: K_{\mu} \te T(K_{\mu}) \subset H$ is an isomorphism, thus speaking roughly, the information of $u$ inside $\Omega$ is being transferred without loss to the boundary data by $T$. It is then convenient to work within the space $H$ consisting of boundary data, since two equations in (\ref{pomp}-\ref{pomp3}) are given in terms of boundary conditions.

\bigskip

Combining the two observations above, we are interested in the following question:\\

(*) If one applies $\frac{\partial}{\partial x}, \frac{\partial}{\partial y}, \mathbf{R}$ repetitively on the solution $\omega$ of (\ref{pomp}-\ref{pomp3}) and then applies the trace map $T$, what is the boundary behavior of these symmetry induced functions?

\bigskip

What we find is that the first few terms obtained this way  capture the "negative direction" on which the bilinear form $$B(\phi, \psi; \mu) = \int\int_{\Omega} \frac{\partial \phi}{\partial x} \cdot  \frac{\partial \bar \psi}{\partial x} + \frac{\partial \phi}{\partial y} \cdot  \frac{\partial \bar\psi}{\partial y}  - \mu \phi \cdot \bar\psi \hspace{.1in} dxdy, \hspace{.1in} \phi, \psi \in H^1(\Omega)$$  is semi-negative definite. Actually we only use terms no higher than second order, i.e. those terms like  $T(\omega_{xx}),  T(\mathbf{R}\omega_x) $ and lower order terms. Analysis of the bilinear form $B$ restricted to these terms gives the following main results in this paper :

\begin{theorem} Let $\Omega$ be an open, bounded domain in $R^2$ with connected and $C^{\infty}$ boundary, and $\omega$ a solution of
\begin{eqnarray}
  -\triangle \omega &=& \mu \omega \\
  \frac{\partial \omega}{\partial n}|_{\partial \Omega} &=& 0\\
  \omega|_{\partial \Omega} &=& const \neq 0
\end{eqnarray}

\noindent for some $\mu > 0$. If $\mu < \mu_8(\Omega)$, then $\Omega$ must be a disk. \end{theorem}

If we restrict to convex domain $\Omega$, a result of Brown and Kahane (\cite{BK}) established the Pompeiu property of $\Omega$ if the minimum diameter of $\Omega$ is less than or equal to half the maximum diameter. Also Aviles (\cite{A}) showed that if $\Omega$ is convex, then $\mu \leq \mu_7(\Omega)$ suffices to show that $\Omega$ is a disk. Thus basically our first result differs from that of Aviles by dropping the convexity condition on $\Omega$. For the convex case, we have the following

\begin{theorem} Let $\Omega$ be a strictly convex, centrally symmetric and bounded domain in $R^2$ with connected and $C^{\infty}$ boundary, and $\omega$ a solution of
\begin{eqnarray}
  -\triangle \omega &=& \mu \omega \\
  \frac{\partial \omega}{\partial n}|_{\partial \Omega} &=& 0\\
  \omega|_{\partial \Omega} &=& const \neq 0
\end{eqnarray}

\noindent for some $\mu > 0$. If $\mu < \mu_{13}(\Omega)$, then $\Omega$ must be a disk. \end{theorem}

\bigskip

The organization of the paper is as follows: in section $1.1$ we fix some notation and setting that will be used throughout the rest of the paper. In Section $2$ we
discuss properties of symmetry (translation and rotation) induced functions and their boundary behavior.
And finally in section $3$ we give proofs of two main results.

\subsection{Notation and Setting}

This section is to fix some notations and setup that will be used throughout the rest of the paper. First, denoting $0 = \mu_1(\Omega) < \mu_2(\Omega) \le \dots$ the set of all Neumann eigenvalues for the Laplacian on $\Omega$, and $0 < \lambda_1(\Omega) < \lambda_2(\Omega) \le \dots$ the set of all Dirichlet eigenvalues for the Laplacian on $\Omega$, with the associated Dirichlet eigenfunctions given by $u_1, u_2, \dots$.

Since $\omega$ is a solution for

\begin{eqnarray}
  -\triangle \omega &=& \mu \omega, \\
  \frac{\partial \omega}{\partial n}|_{\partial \Omega} &=& 0,\\
  \omega|_{\partial \Omega} &=& const \neq 0,
\end{eqnarray}

\noindent by rescaling $(x, y)  \te \sqrt{\mu}(x, y)$ we may assume that $\mu = 1$, and by multiplying an appropriate constant we may assume $\omega|_{\partial \Omega} \equiv 1$ .

Also for the domain $\Omega$, we will use arclength variable $s$ to parametrize $\partial \Omega$. Assuming that the total arclength of $\partial \Omega$ is $L > 0$, the boundary $\partial \Omega$ is given by the
parametric equation

\begin{equation}\label{parametrization_of_curve}
z(s) = x(s) + i y(s), s \in R
\end{equation}

\noindent where $z(s)$ is a $C^{\infty}$ and periodic function of $s$ of minimal period $L$.

Since $s$ is the arclength variable,  $\frac{dz}{ds}$ is a $C^{\infty}$ function of $s$ of unit length, thus we have

\begin{equation}
\frac{dz}{ds} = e^{i\theta(s)}, s \in R.
\end{equation}

\noindent where $\theta = \theta(s)$ is the angle of the tangent vector along $\partial \Omega$ with respect to $x$-axis, thus $\theta(s)$ is a $C^{\infty}$ function of $s$ with $\theta(s + L) - \theta(s) = 2\pi, s \in R$.

The curvature $\kappa(s)$ along $\partial \Omega$ is given by

\begin{equation}
\kappa(s) =  -\frac{d\theta}{ds},
\end{equation}

\noindent where the minus sign comes from the Frenet's theorem.


\begin{notation} We shall adapt the notation $\nabla \stackrel{def}= \frac{\partial }{\partial x} + i  \frac{\partial }{\partial y}, \bar \nabla \stackrel{def}= \frac{\partial }{\partial x} - i  \frac{\partial }{\partial y}$ instead of the usual notation $2 \frac{\partial}{ \partial \bar z} = \frac{\partial }{\partial x} + i  \frac{\partial }{\partial y}, 2 \frac{\partial}{\partial z} = \frac{\partial }{\partial x} - i  \frac{\partial }{\partial y}$.\end{notation}

Finally let $\vec{\xi}(s) = (\cos\theta(s), \sin\theta(s))$ be the unit tangent vector along $\partial \Omega$, and $\vec{n}(s) = (n_1, n_2)(s)$ be the unit outer normal vector along $\partial \Omega$, thus we have $(n_1, n_2)(s) = (\sin\theta(s), -\cos\theta(s))$ along $\partial \Omega$.

\section{Symmetry induced boundary value functions}

We first have the following

\begin{proposition}Let $u$ be a $C^{\infty}$ function in $\bar \Omega$ satisfying $-\triangle u = u$. Let $\phi = \nabla u$, then we have
the following commutative diagram:

$\begin{array}{ccc}

  u & \stackrel{\nabla}\longrightarrow & \phi \\
  T \downarrow &  & T \downarrow \\

  \left(
    \begin{array}{c}
      u|_{\partial \Omega} \\
      \frac{\partial u}{\partial n}|_{\partial \Omega} \\
    \end{array}
  \right)
  &
  \stackrel{M}\longrightarrow &  \left(
    \begin{array}{c}
      \phi|_{\partial \Omega} \\
      \frac{\partial \phi}{\partial n}|_{\partial \Omega} \\
    \end{array}
  \right)

   \end{array}$, where $M = e^{i\theta(s)} \cdot \left(
                                                  \begin{array}{cc}
                                                    \frac{d}{ds} & -i \\
                                                    \kappa\frac{d}{ds}+i(\frac{d^2}{ds^2}+ 1) & -i\kappa + \frac{d}{ds} \\
                                                  \end{array}
                                                \right)$.

\end{proposition}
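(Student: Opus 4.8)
The plan is to carry out everything in the orthonormal boundary frame $\{\vec\xi,\vec n\}$, where the first-order operator $\nabla$ takes a particularly clean form. Writing $\partial_\xi=\cos\theta\,\partial_x+\sin\theta\,\partial_y$ and $\partial_n=\sin\theta\,\partial_x-\cos\theta\,\partial_y=\frac{\partial}{\partial n}$ for the tangential and normal directional derivatives, a direct computation from $\vec\xi=(\cos\theta,\sin\theta)$ and $\vec n=(\sin\theta,-\cos\theta)$ yields the pointwise operator identities
$$\nabla=e^{i\theta}\bigl(\partial_\xi-i\,\partial_n\bigr),\qquad \bar\nabla=e^{-i\theta}\bigl(\partial_\xi+i\,\partial_n\bigr).$$
Restricting the first identity to $\partial\Omega$, where $\partial_\xi u=\frac{d}{ds}(u|_{\partial\Omega})$ and $\partial_n u=\frac{\partial u}{\partial n}|_{\partial\Omega}$, gives $\phi|_{\partial\Omega}=e^{i\theta}\bigl(\frac{d}{ds}(u|_{\partial\Omega})-i\,\frac{\partial u}{\partial n}|_{\partial\Omega}\bigr)$, which is exactly the first row of $M$.

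For the second row I would invert the two identities to obtain $\frac{\partial}{\partial n}=\frac{1}{2i}\bigl(e^{i\theta}\bar\nabla-e^{-i\theta}\nabla\bigr)$ as an operator at each boundary point, and apply it to $\phi=\nabla u$. Using $\bar\nabla\nabla=\triangle$ together with the equation $-\triangle u=u$ gives $\bar\nabla\phi=\triangle u=-u$, so the only genuinely second-order piece left is $\nabla\phi=\nabla^{2}u$. Hence
$$\frac{\partial\phi}{\partial n}=\frac{1}{2i}\bigl(-e^{i\theta}u-e^{-i\theta}\nabla^{2}u\bigr),$$
and the whole problem reduces to rewriting $\nabla^{2}u$ in terms of the boundary data $(u|_{\partial\Omega},\frac{\partial u}{\partial n}|_{\partial\Omega})$.

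To do this I would pass to the second directional derivatives $\partial_{\xi\xi}u$, $\partial_{nn}u$, $\partial_{\xi n}u$ in the boundary frame. A rotation-of-axes computation shows $\nabla^{2}u=e^{2i\theta}\bigl((\partial_{\xi\xi}u-\partial_{nn}u)-2i\,\partial_{\xi n}u\bigr)$, whose factor $e^{2i\theta}$ neatly cancels the $e^{-i\theta}$ above and restores the common prefactor $e^{i\theta}$. The three directional derivatives are then recovered from data on $\partial\Omega$ via the Frenet relations $\frac{d\vec\xi}{ds}=\kappa\vec n$, $\frac{d\vec n}{ds}=-\kappa\vec\xi$ (with the paper's convention $\kappa=-\frac{d\theta}{ds}$): differentiating the boundary data along $\partial\Omega$ produces
$$\frac{d^{2}}{ds^{2}}(u|_{\partial\Omega})=\partial_{\xi\xi}u+\kappa\,\frac{\partial u}{\partial n},\qquad \frac{d}{ds}\Bigl(\frac{\partial u}{\partial n}\Bigr)=\partial_{\xi n}u-\kappa\,\frac{d}{ds}(u|_{\partial\Omega}),$$
while the equation supplies the third relation $\partial_{\xi\xi}u+\partial_{nn}u=-u$. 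Solving this linear system for $\partial_{\xi\xi}u$, $\partial_{nn}u$, $\partial_{\xi n}u$, substituting into the expression for $\frac{\partial\phi}{\partial n}$, and collecting the coefficients of $u|_{\partial\Omega}$ and $\frac{\partial u}{\partial n}|_{\partial\Omega}$ should reproduce exactly the entries $\kappa\frac{d}{ds}+i(\frac{d^{2}}{ds^{2}}+1)$ and $-i\kappa+\frac{d}{ds}$ in the bottom row of $M$.

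The main obstacle is the bookkeeping of the curvature terms in the step where second derivatives of $u$ are traded for $s$-derivatives of the boundary data: because the frame $\{\vec\xi,\vec n\}$ rotates along $\partial\Omega$, every differentiation in $s$ generates a $\kappa$-correction, and it is precisely these corrections that account for the $\kappa\frac{d}{ds}$ and $-i\kappa$ terms in $M$. Keeping the Frenet signs consistent with $\kappa=-\frac{d\theta}{ds}$, and making sure the scalar factors $e^{\pm i\theta}$ are applied only after $\nabla$ and $\bar\nabla$ act (so that no derivatives of $e^{\pm i\theta}$ are mistakenly introduced), are the two places where sign errors are easiest to make; everything else is routine trigonometric simplification.
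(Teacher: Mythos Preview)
Your proposal is correct and follows essentially the same route as the paper: both compute in the rotating boundary frame $\{\vec\xi,\vec n\}$, use the equation $-\triangle u=u$ to eliminate one second directional derivative, and then invoke the Frenet relations $\frac{d\vec\xi}{ds}=\kappa\vec n$, $\frac{d\vec n}{ds}=-\kappa\vec\xi$ to convert $\partial_{\xi\xi}u$ and $\partial_{\xi n}u$ into $s$-derivatives of $(u|_{\partial\Omega},\frac{\partial u}{\partial n}|_{\partial\Omega})$. The only cosmetic difference is that you organize the second-row computation via the operator identities $\partial_n=\frac{1}{2i}\bigl(e^{i\theta}\bar\nabla-e^{-i\theta}\nabla\bigr)$ and $\bar\nabla\nabla=\triangle$, whereas the paper reaches the same intermediate expression $e^{i\theta}\bigl(i(u+\partial_{\xi\xi}u)+\partial_{\xi n}u\bigr)$ by a direct matrix manipulation of the Hessian before rotating to the local frame.
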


\begin{proof} For any $z_0 = z(s_0) \in \partial \Omega$, we choose a new coordinate system $(\tilde x, \tilde y)$ such that the $\tilde x$-axis is along the direction of $\vec{\xi}(s_0) = (\cos\theta(s_0), \sin \theta(s_0))$, the $\tilde y$-axis is along the direction of $-\vec{n}(s_0) = (-\sin\theta(s_0), \cos\theta(s_0))$. Then we have

\begin{eqnarray}
e^{-i\theta(s_0)} \cdot \nabla u(z_0) = (\cos \theta(s_0) - i \sin \theta(s_0)) \cdot (u_x + i u_y)(z_0) = \\
. [\cos\theta(s_0) u_x(z_0) + \sin\theta(s_0) u_y(z_0)] - i [ \sin\theta(s_0) u_x(z_0) -\cos\theta(s_0) u_y(z_0)] \\
= \frac{d}{ds}|_{s = s_0} u|_{\partial \Omega} -i \frac{\partial u}{\partial n}|_{z = z_0},
\end{eqnarray}

\noindent which leads to

\begin{equation}\label{form1}
\nabla u(z_0) = e^{i\theta(s_0)} \cdot [ \frac{d}{ds}|_{s = s_0} u|_{\partial \Omega} -i \frac{\partial u}{\partial n}|_{z = z_0}]
\end{equation}

Secondly at $s = s_0$, we have

\begin{eqnarray*}
\frac{\partial \nabla u}{\partial n} = (\sin\theta, -\cos\theta) \cdot \left(
                                                  \begin{array}{cc}
                                                    u_{xx} & u_{xy} \\
                                                    u_{xy} & u_{yy}
                                                  \end{array}
                                                \right) \cdot \left(
                                                  \begin{array}{c}
                                                    1 \\
                                                    i
                                                  \end{array}
                                                \right) \\
= (\sin\theta, -\cos\theta) \cdot \left(
                                                  \begin{array}{cc}
                                                    -u_{yy} & u_{xy} \\
                                                    u_{xy} & -u_{xx}
                                                  \end{array}
                                                \right) \cdot \left(
                                                  \begin{array}{c}
                                                    1 \\
                                                    i
                                                  \end{array}
                                                \right) + e^{i\theta} \cdot i u \\
= (\sin\theta, -\cos\theta) \cdot \left(
                                                  \begin{array}{cc}
                                                    \cos \theta & -\sin\theta \\
                                                    \sin \theta & \cos \theta
                                                  \end{array}
                                                \right)  \left(
                                                  \begin{array}{cc}
                                                    -u_{\tilde y \tilde y} & u_{\tilde x \tilde y} \\
                                                    u_{\tilde x \tilde y} & -u_{\tilde x \tilde x}
                                                  \end{array}
                                                \right) \left(
                                                  \begin{array}{cc}
                                                    \cos \theta & \sin\theta \\
                                                    -\sin \theta & \cos \theta
                                                  \end{array}
                                                \right) \cdot \left(
                                                  \begin{array}{c}
                                                    1 \\
                                                    i
                                                  \end{array}
                                                \right) + e^{i\theta} \cdot i u \\
= e^{i\theta} \cdot i(u_{\tilde x \tilde x} + i u_{\tilde x \tilde y})  + e^{i\theta} \cdot  iu
\end{eqnarray*}

Note that

\begin{eqnarray} \label{frame1}
u_{\tilde x \tilde x} = \vec{\xi} \cdot \nabla^2 u \cdot \vec{\xi} = \frac{d^2 u}{ds^2} - \kappa \frac{\partial u}{\partial n}, \\ \label{frame2}
u_{\tilde x \tilde y} = -\vec{\xi} \cdot \nabla^2 u \cdot \vec{n} = -\frac{d \frac{\partial u}{\partial n}}{ds} - \kappa \cdot \frac{du}{ds}
\end{eqnarray}

\noindent where we used the Frenet's theorem $\frac{d \vec{\xi}}{ds} = \kappa \vec{n}, \frac{d \vec{n}}{ds} = -\kappa \vec{\xi}$.

Combining (\ref{frame1}), (\ref{frame2}) with the equation above we have that

\begin{equation}\label{form2}
\frac{\partial \nabla u}{\partial n} = e^{i\theta} \cdot [i(\frac{d^2 u}{ds^2 } + u) + \kappa \frac{du}{ds} + (-i \kappa + \frac{d}{ds})\frac{\partial u}{\partial n}]
\end{equation}

Thus from equations (\ref{form1}), (\ref{form2}) we complete the proof of the proposition.
\end{proof}

\begin{remark}
It is interesting to note that while $\nabla$ is a first-order partial differential operator, the corresponding matrix operator $M$ is a second order \emph{ordinary} differential operator that is purely "geometric", i.e., $M$ depends solely on $\theta = \theta(s)$.
\end{remark}

\begin{corollary}For $\bar \nabla = \frac{\partial}{\partial x} - i \frac{\partial}{\partial y}$, we have the corresponding matrix $\bar M = e^{-i\theta(s)} \cdot \left(
                                                  \begin{array}{cc}
                                                    \frac{d}{ds} & i \\
                                                    \kappa\frac{d}{ds}-i(\frac{d^2}{ds^2}+ 1) & i\kappa + \frac{d}{ds} \\
                                                  \end{array}
                                                \right)$ so that the following diagram commutes: $$\begin{array}{ccc}

  u & \stackrel{\bar \nabla}\longrightarrow & \psi \\
  T \downarrow &  & T \downarrow \\

  \left(
    \begin{array}{c}
      u|_{\partial \Omega} \\
      \frac{\partial u}{\partial n}|_{\partial \Omega} \\
    \end{array}
  \right)
  &
  \stackrel{ \bar M}\longrightarrow &  \left(
    \begin{array}{c}
      \psi|_{\partial \Omega} \\
      \frac{\partial \psi}{\partial n}|_{\partial \Omega} \\
    \end{array}
  \right)

   \end{array}$$ for any $C^{\infty}$ function $u$ in $\bar \Omega$ satisfying $-\triangle u = u$ and $\psi = \bar \nabla u$.

\end{corollary}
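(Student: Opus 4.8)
The plan is to deduce the corollary from the proposition by complex conjugation, so that no second computation is needed. The one point that requires care is that $u$ is not assumed to be real-valued, so one may \emph{not} simply conjugate the identity $\phi = \nabla u$ to obtain $\psi = \bar\nabla u$. The device that circumvents this is the algebraic identity $\bar\nabla u = \overline{\nabla \bar u}$, where $\bar u$ denotes the pointwise complex conjugate of $u$. Indeed, $\nabla \bar u = \frac{\partial \bar u}{\partial x} + i\frac{\partial \bar u}{\partial y} = \overline{\frac{\partial u}{\partial x} - i\frac{\partial u}{\partial y}} = \overline{\bar\nabla u}$, and conjugating both sides gives the claimed identity. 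Moreover, since the Helmholtz equation $-\triangle u = u$ has real coefficients, $\bar u$ is again a $C^{\infty}$ function in $\bar\Omega$ with $-\triangle \bar u = \bar u$, so the proposition applies to it.

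First I would apply the proposition to $\bar u$. Writing $\phi' = \nabla \bar u$, the commutative diagram of the proposition yields
$$\left(\begin{array}{c}\phi'|_{\partial\Omega}\\ \frac{\partial\phi'}{\partial n}|_{\partial\Omega}\end{array}\right) = M\left(\begin{array}{c}\bar u|_{\partial\Omega}\\ \frac{\partial\bar u}{\partial n}|_{\partial\Omega}\end{array}\right).$$
I would then take the pointwise complex conjugate of this vector identity along $\partial\Omega$. The restriction-to-boundary and normal-derivative operations are real-linear, and the boundary curve together with its outward normal $\vec{n}$ are real geometric data, so both commute with conjugation: $\overline{\phi'|_{\partial\Omega}} = \overline{\phi'}\,|_{\partial\Omega}$, $\overline{\frac{\partial\phi'}{\partial n}} = \frac{\partial\overline{\phi'}}{\partial n}$, and likewise $\overline{\bar u|_{\partial\Omega}} = u|_{\partial\Omega}$, $\overline{\frac{\partial\bar u}{\partial n}} = \frac{\partial u}{\partial n}$. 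Since $\overline{\phi'} = \overline{\nabla\bar u} = \bar\nabla u = \psi$, the conjugated left-hand side is exactly the boundary data of $\psi$.

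It then remains only to identify the conjugated matrix. The entries of $M$ are built from the real operators $\frac{d}{ds}$, $\frac{d^2}{ds^2}$, the real curvature $\kappa$, and the scalar factor $e^{i\theta}$, so entrywise conjugation of $M$ amounts to replacing $e^{i\theta}$ by $e^{-i\theta}$ and each explicit $i$ by $-i$, producing
$$e^{-i\theta(s)}\cdot\left(\begin{array}{cc}\frac{d}{ds} & i\\ \kappa\frac{d}{ds} - i\left(\frac{d^2}{ds^2}+1\right) & i\kappa + \frac{d}{ds}\end{array}\right),$$
which is precisely $\bar M$. Combining the displayed facts gives the asserted commuting square. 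I expect the only genuine subtlety to be the bookkeeping just described, namely that the non-reality of $u$ forces one to route through $\overline{\nabla\bar u}$ rather than conjugating $\nabla u$ directly; once the relation $\bar M = \overline{M}$ is made transparent, the argument is routine. (Alternatively, one could simply rerun the proposition's computation with $i$ replaced by $-i$ throughout, but the conjugation argument is shorter and explains why $\bar M$ has exactly this form.)
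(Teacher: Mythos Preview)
The paper states this corollary without proof, treating it as an immediate consequence of Proposition~2.1. Your conjugation argument is correct and is precisely the sort of deduction that justifies calling it a corollary: applying the proposition to $\bar u$ and then conjugating, using that $\frac{d}{ds}$, $\kappa$, and the normal derivative are real while $\overline{\nabla\bar u}=\bar\nabla u$, recovers $\bar M$ exactly.
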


Instead of considering $\nabla = \frac{\partial}{\partial x} + i \frac{\partial}{\partial y}$ that corresponds to translational symmetry, one can also consider generators $\mathbf{R} \stackrel{def}= -y \frac{\partial}{\partial x} + x \frac{\partial}{\partial y}$ and $ \mathbf{S} \stackrel{def}= x \frac{\partial}{\partial x} + y \frac{\partial}{\partial y}$ that correspond to symmetry of rotation and scaling. Similar to Proposition 2.1 we have

\begin{corollary}For $\mathbf{R} + i\mathbf{S} = (-y + ix) \cdot \bar\nabla,$ we have the corresponding matrix $N = ( - y(s) + ix(s)  ) \bar M +  \left(
                                                  \begin{array}{cc}
                                                    0 & 0 \\
                                                    \frac{d}{ds} & i \\
                                                  \end{array}
                                                \right)$ (where $x(s) + i y(s)$ is the parametrization of $\partial \Omega$ with respect to arclength variable $s$), so that the following diagram commutes: $$\begin{array}{ccc}

  u & \stackrel{\mathbf{R} + i\mathbf{S}}\longrightarrow & \psi \\
  T \downarrow &  & T \downarrow \\

  \left(
    \begin{array}{c}
      u|_{\partial \Omega} \\
      \frac{\partial u}{\partial n}|_{\partial \Omega} \\
    \end{array}
  \right)
  &
  \stackrel{N}\longrightarrow &  \left(
    \begin{array}{c}
      \psi|_{\partial \Omega} \\
      \frac{\partial \psi}{\partial n}|_{\partial \Omega} \\
    \end{array}
  \right)

   \end{array}$$ for any $C^{\infty}$ function $u$ in $\bar \Omega$ satisfying $-\triangle u = u$ and $\psi = (\mathbf{R} + i\mathbf{S}) u$.

\end{corollary}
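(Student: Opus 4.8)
The plan is to factor the operator as $\mathbf{R}+i\mathbf{S} = g\,\bar\nabla$ with $g(x,y) = -y+ix$ — the identity displayed in the statement, which I would confirm in one line by expanding $g(\partial_x - i\partial_y)$ and matching the coefficients of $\partial_x,\partial_y$ against $-y\partial_x + x\partial_y + i(x\partial_x + y\partial_y)$ — and then to reduce the claim to Corollary 2.2 composed with the action of the trace map under multiplication by the fixed smooth function $g$. Writing $v = \bar\nabla u$ and $\psi = g\,v$, the Leibniz rule gives, on $\partial\Omega$,
$$\psi|_{\partial\Omega} = g\,v|_{\partial\Omega}, \qquad \frac{\partial\psi}{\partial n}\Big|_{\partial\Omega} = \frac{\partial g}{\partial n}\,v|_{\partial\Omega} + g\,\frac{\partial v}{\partial n}\Big|_{\partial\Omega}.$$
In other words, at the level of boundary data, multiplication by $g$ is represented by the lower-triangular matrix $\left(\begin{array}{cc} g & 0 \\ \partial g/\partial n & g \end{array}\right)$, all entries restricted to $\partial\Omega$, so that $N = \left(\begin{array}{cc} g & 0 \\ \partial g/\partial n & g \end{array}\right)\bar M$.

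The second step is the purely geometric computation of $\partial g/\partial n$ along $\partial\Omega$. Since $\nabla g = (i,-1)$ and $\vec n = (\sin\theta, -\cos\theta)$ from the Setting, I get $\frac{\partial g}{\partial n} = i\sin\theta + \cos\theta = e^{i\theta}$. This is the key input: the off-diagonal multiplier is exactly $e^{i\theta}$, which is poised to cancel the $e^{-i\theta}$ prefactor sitting in front of $\bar M$.

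The third step is to carry out the matrix product $N = \left(\begin{array}{cc} g & 0 \\ e^{i\theta} & g \end{array}\right)\bar M$ and record the cancellation. The diagonal $g$-entries simply reproduce $g\,\bar M$, which accounts for the $(-y+ix)\bar M$ term and contributes nothing off-diagonal in the top row. In the bottom row, the entry $e^{i\theta}$ multiplies the first row of $\bar M$, whose prefactor is $e^{-i\theta}$; since no derivative is interposed between these two multiplications, $e^{i\theta}\cdot e^{-i\theta}\frac{d}{ds} = \frac{d}{ds}$ and $e^{i\theta}\cdot e^{-i\theta}\,i = i$, producing precisely the constant block $\left(\begin{array}{cc} 0 & 0 \\ \frac{d}{ds} & i \end{array}\right)$. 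Hence $N = (-y+ix)\bar M + \left(\begin{array}{cc} 0 & 0 \\ \frac{d}{ds} & i \end{array}\right)$ as asserted, and the full diagram commutes by composing this multiplication step with the commuting square of Corollary 2.2.

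The computation is routine Leibniz bookkeeping, so the one point I would be careful to state is conceptual rather than computational: $\psi = (\mathbf{R}+i\mathbf{S})u$ is generally \emph{not} a solution of $-\triangle\psi = \psi$, because while $\mathbf{R}$ commutes with $\triangle$, the scaling field $\mathbf{S}$ satisfies $[\triangle,\mathbf{S}] = 2\triangle$ and does not. Consequently $\psi\notin K_{\mu}$ in general. This is immaterial here: the only place the relation $-\triangle u = u$ enters is through $\bar M$, which encodes it for $u$ via Proposition 2.1, whereas the product-rule step is pure calculus and requires nothing about $\psi$ beyond smoothness. I would make this explicit so that the meaning of $T(\psi)$ in the diagram — the trace of a non-solution — is unambiguous.
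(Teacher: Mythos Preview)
Your argument is correct and is exactly the natural derivation the paper has in mind: factor $(\mathbf{R}+i\mathbf{S})=g\,\bar\nabla$, apply the $\bar\nabla$-diagram (Corollary~2.3 in the paper's numbering, not 2.2), and handle the outer multiplication by $g$ via the Leibniz rule, noting that $\partial g/\partial n = e^{i\theta}$ kills the $e^{-i\theta}$ prefactor in the first row of $\bar M$. Your closing remark that $\psi\notin K_\mu$ in general is a useful clarification the paper leaves implicit.
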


The approach we take is to apply $\nabla, \bar \nabla, \mathbf{R}$ repeatedly on $\omega$ to produce functions that lie in $K_{\mu}$, (i.e., they all satisfy $
-\triangle u = \mu u $) and then use the trace map $T$ on these functions to examine their boundary behavior. Using Proposition 2.1 and Corollary 2.4 we obtain easily the following

\begin{lemma} For $\frac{\partial \omega}{\partial x}, \frac{\partial \omega}{\partial y}, \frac{\partial^2 \omega}{\partial x^2}, \frac{\partial^2 \omega}{\partial x \partial y}, \frac{\partial^2 \omega}{\partial y^2}$, the trace map $T: K_{\mu} \te H^{\frac{1}{2}}(\partial \Omega) \times H^{-\frac{1}{2}}(\partial \Omega)$ on these functions gives the following table \\

\begin{tabular}{|c|c|c|c|c|c|}
               \hline
               $\backslash$ & $\frac{\partial \omega}{\partial x}$ & $\frac{\partial \omega}{\partial y}$ & $\frac{\partial^2 \omega}{\partial x^2}$ & $\frac{\partial^2 \omega}{\partial x \partial y}$ & $\frac{\partial^2 \omega}{\partial y^2}$ \\ \hline
               T  & $\left(
              \begin{array}{c}
                0 \\
                -\sin \theta(s)
              \end{array}
            \right)$ & $\left(
              \begin{array}{c}
                0 \\
                \cos \theta(s)
              \end{array}
            \right)$ & $\left(
              \begin{array}{c}
                -\frac{1}{2}(1 - \cos 2\theta(s)) \\
                \kappa\cos 2\theta(s)
              \end{array}
            \right)$ & $\left(
              \begin{array}{c}
                \frac{1}{2} \sin 2 \theta(s) \\
                \kappa \sin 2 \theta(s)
              \end{array}
            \right)$ & $\left(
              \begin{array}{c}
               -\frac{1}{2}( 1 + \cos 2\theta(s)) \\
               -\kappa \cos 2\theta(s)
              \end{array}
            \right)$ \\
               \hline
             \end{tabular}

\end{lemma}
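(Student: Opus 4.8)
The plan is to read the entire table off the two commutative diagrams (Proposition 2.1 and Corollary 2.4) by iterating the purely geometric matrix operator $M$, starting from the known boundary data of $\omega$ itself. After the normalizations $\mu = 1$ and $\omega|_{\partial \Omega}\equiv 1$, the boundary conditions $\frac{\partial \omega}{\partial n}|_{\partial \Omega}=0$ give $T(\omega) = \left(\begin{smallmatrix}1\\0\end{smallmatrix}\right)$. Since each of $\omega_x,\omega_y,\omega_{xx},\omega_{xy},\omega_{yy}$ is a real or imaginary part of a function of the form $\nabla\omega$ or $\nabla\nabla\omega$ --- both of which again solve $-\triangle v = v$, so that Proposition 2.1 applies at each stage --- the whole computation reduces to multiplying $T(\omega)$ by $M$ once or twice and then separating real and imaginary parts.

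For the first-order terms I would apply Proposition 2.1 once. Writing $\nabla\omega = \omega_x + i\omega_y$, we have $T(\nabla\omega) = M\,T(\omega)$; a direct evaluation of $M\left(\begin{smallmatrix}1\\0\end{smallmatrix}\right)$ collapses because the top-left entry $\frac{d}{ds}$ annihilates the constant $1$, leaving $T(\nabla\omega) = \left(\begin{smallmatrix}0\\ i e^{i\theta}\end{smallmatrix}\right)$. Taking real and imaginary parts of $0$ and of $ie^{i\theta} = -\sin\theta + i\cos\theta$ then yields $T(\omega_x) = \left(\begin{smallmatrix}0\\ -\sin\theta\end{smallmatrix}\right)$ and $T(\omega_y) = \left(\begin{smallmatrix}0\\ \cos\theta\end{smallmatrix}\right)$, the first two columns.

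For the second-order terms the idea is to apply $M$ a second time, $T(\nabla\nabla\omega) = M\,T(\nabla\omega)$, and then close the system with the equation itself. The operator $\nabla^2 = (\partial_x + i\partial_y)^2$ satisfies $\nabla\nabla\omega = (\omega_{xx}-\omega_{yy}) + 2i\,\omega_{xy}$, so it only sees the trace-free part of the Hessian; to recover $\omega_{xx}$ and $\omega_{yy}$ separately I would adjoin the relation $\omega_{xx}+\omega_{yy} = \triangle\omega = -\omega$, whose trace is $\left(\begin{smallmatrix}-1\\ 0\end{smallmatrix}\right)$. Computing $M\left(\begin{smallmatrix}0\\ ie^{i\theta}\end{smallmatrix}\right)$ gives $T(\nabla\nabla\omega) = \left(\begin{smallmatrix}e^{2i\theta}\\ 2\kappa e^{2i\theta}\end{smallmatrix}\right)$; splitting $e^{2i\theta} = \cos2\theta + i\sin2\theta$ into real and imaginary parts identifies the traces of $\omega_{xx}-\omega_{yy}$ and of $2\omega_{xy}$, and solving the resulting $2\times 2$ linear system against the trace of $\omega_{xx}+\omega_{yy}$ produces exactly the three remaining columns.

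The one genuine computation, and hence the only place to be careful, is evaluating $M\left(\begin{smallmatrix}0\\ ie^{i\theta}\end{smallmatrix}\right)$: here the second-order ordinary differential operator in the bottom row of $M$ must be applied to the non-constant entry $ie^{i\theta}$, and the simplification $\frac{d}{ds}e^{i\theta} = i\theta' e^{i\theta}$ together with the Frenet relation $\kappa = -\theta'$ is what converts $(-i\kappa + \frac{d}{ds})(ie^{i\theta})$ into $2\kappa e^{i\theta}$ and thus produces the curvature factor in the normal-derivative row. Everything else is bookkeeping of real and imaginary parts, so I do not expect any further obstacle.
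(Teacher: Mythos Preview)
Your proposal is correct and is precisely the argument the paper has in mind: the paper does not spell out a proof of this lemma beyond the remark that it follows ``easily'' from Proposition~2.1 (and its conjugate), and your iteration $T(\omega)\mapsto M\,T(\omega)\mapsto M^2\,T(\omega)$ together with the relation $\nabla^2\omega=(\omega_{xx}-\omega_{yy})+2i\,\omega_{xy}$ and $\omega_{xx}+\omega_{yy}=-\omega$ is exactly how one reads off the table. The only delicate step---the evaluation of $(-i\kappa+\frac{d}{ds})(ie^{i\theta})=2\kappa e^{i\theta}$ via $\kappa=-\theta'$---is handled correctly.
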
 \bigskip

Similarly we have

\begin{lemma} For $\mathbf{R} \omega, \mathbf{R}^2 \omega, \nabla \mathbf{R}  \omega$, the trace map $T$ on these functions gives the following table \\

\begin{tabular}{|l|c|c|c|}
               \hline
               $\backslash$ & $\mathbf{R}\omega$ & $\mathbf{R}^2 \omega$ & $ \nabla \mathbf{R} \omega$ \\ \hline
               T   & $\left(
              \begin{array}{c}
                0 \\
                \frac{1}{2}\frac{d r^2}{ds}
              \end{array}
            \right)$   & $\left(
              \begin{array}{c}
                (-1) \cdot (\frac{1}{2} \frac{dr^2}{ds})^2 \\
                \frac{1}{2} \frac{d^2 r^2}{ds^2} \cdot (-y \frac{dx}{ds} + x \frac{dy}{ds}) -\kappa \cdot (\frac{1}{2} \frac{dr^2}{ds})^2
              \end{array}
            \right)$ & $\left(
              \begin{array}{c}
                (-i) \cdot e^{i\theta} \cdot \frac{1}{2} \frac{dr^2}{ds} \\
               (-i) \cdot e^{i\theta} \cdot ( \kappa \cdot \frac{1}{2} \frac{dr^2}{ds}  + i(\frac{1}{2}\frac{d^2 r^2}{ds^2}))
              \end{array}
            \right)$ \\
               \hline
             \end{tabular}

\noindent where $\theta = \theta(s), r^2(s) = x^2(s) + y^2(s)$.

\end{lemma}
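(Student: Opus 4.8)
The plan is to obtain every entry of the table by feeding the known boundary data of $\omega$ into the two commutative diagrams already established, and then iterating. Two structural facts drive the argument. First, the rotation generator commutes with the Laplacian, $[\triangle,\mathbf{R}]=0$, so that $-\triangle(\mathbf{R}\omega)=\mathbf{R}\omega$; hence $\mathbf{R}\omega$ again lies in $K_\mu$ and Proposition 2.1 and Corollary 2.4 apply to it verbatim. Second, since $\omega$ is real-valued, $(\mathbf{R}+i\mathbf{S})v=\mathbf{R}v+i\mathbf{S}v$ with both $\mathbf{R}v$ and $\mathbf{S}v$ real whenever $v$ is real; because $T$ and the operations ``restrict to $\partial\Omega$'' and ``take normal derivative'' are real-linear, one has $T(\mathbf{R}v)=\mathrm{Re}\,T((\mathbf{R}+i\mathbf{S})v)$. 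Thus $\mathbf{R}\omega$ and $\mathbf{R}^2\omega$ will be recovered as the real parts of $N\,T(\omega)$ and $N\,T(\mathbf{R}\omega)$, while $\nabla\mathbf{R}\omega$ is read off directly (and kept complex) from $M\,T(\mathbf{R}\omega)$. The starting datum is $T(\omega)=(1,0)^T$, immediate from $\omega|_{\partial\Omega}\equiv 1$ and $\partial\omega/\partial n|_{\partial\Omega}=0$.

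For the first column I would compute $N\,(1,0)^T$ using the formula $N=(-y+ix)\bar M+\left(\begin{array}{cc}0&0\\ \frac{d}{ds} & i\end{array}\right)$ of Corollary 2.4. Applying $\bar M$ to the constant vector $(1,0)^T$ annihilates the entries containing $\frac{d}{ds}$ and leaves $(0,-ie^{-i\theta})^T$; multiplying by $-y+ix$ turns the lower entry into $(-y+ix)(-i)e^{-i\theta}=(x+iy)e^{-i\theta}$, whose real part is $x\cos\theta+y\sin\theta$. Since $\frac{dx}{ds}=\cos\theta$ and $\frac{dy}{ds}=\sin\theta$, this is exactly $\frac{1}{2}\,\frac{dr^2}{ds}$, while the upper entry vanishes; this gives the $\mathbf{R}\omega$ column $\bigl(0,\ \frac{1}{2}\,\frac{dr^2}{ds}\bigr)^T$. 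The vanishing of the boundary value is the expected consistency check: $\omega|_{\partial\Omega}$ constant and $\partial\omega/\partial n=0$ force the full gradient, hence $\mathbf{R}\omega$ and $\mathbf{S}\omega$, to vanish on $\partial\Omega$.

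For the remaining two columns I would feed $T(\mathbf{R}\omega)=(0,g)^T$, with $g=\frac{1}{2}\,\frac{dr^2}{ds}$, into $N$ and into $M$. For $\mathbf{R}^2\omega$, applying $N$ gives an upper entry $(-y+ix)\,ig\,e^{-i\theta}=-(x+iy)g\,e^{-i\theta}$, whose real part is $-g\,(x\cos\theta+y\sin\theta)=-g^2$, matching $(-1)(\frac{1}{2}\frac{dr^2}{ds})^2$. The lower entry combines the term $(-y+ix)e^{-i\theta}(i\kappa g+g')$ coming from $N$, for which $\mathrm{Re}[(-y+ix)e^{-i\theta}]=-y\frac{dx}{ds}+x\frac{dy}{ds}$ and $\mathrm{Im}[(-y+ix)e^{-i\theta}]=g$, with the $ig$ from the extra matrix; taking the real part yields $g'\bigl(-y\frac{dx}{ds}+x\frac{dy}{ds}\bigr)-\kappa g^2$, i.e. the claimed expression with $g'=\frac{1}{2}\frac{d^2r^2}{ds^2}$. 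For $\nabla\mathbf{R}\omega$, applying $M$ to $(0,g)^T$ gives the upper entry $-ige^{i\theta}$ and the lower entry $e^{i\theta}(-i\kappa g+g')$; here no real part is taken, and rewriting $e^{i\theta}(-i\kappa g+g')=(-i)e^{i\theta}(\kappa g+ig')$ produces precisely the tabulated complex pair.

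The only genuine work is the trigonometric and real-part bookkeeping, and the place to be careful is keeping track of which functions are real: one must invoke the reality of $\omega$ and of $\mathbf{R}\omega$ to split $\mathbf{R}+i\mathbf{S}$ into its real part $\mathbf{R}$, take real parts of the $N$-outputs for the $\mathbf{R}\omega$ and $\mathbf{R}^2\omega$ columns, yet retain the full complex output of $M$ for $\nabla\mathbf{R}\omega$. Everything else reduces to the identities $\frac{dx}{ds}=\cos\theta$, $\frac{dy}{ds}=\sin\theta$, $\mathrm{Re}[(x+iy)e^{-i\theta}]=x\cos\theta+y\sin\theta=g$, and $\mathrm{Im}[(-y+ix)e^{-i\theta}]=g$, together with the commutation relation $[\triangle,\mathbf{R}]=0$ that legitimizes the second application of the diagrams.
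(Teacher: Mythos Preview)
Your proposal is correct and follows exactly the route the paper intends: the paper states Lemma~2.6 after ``Similarly we have,'' meaning one is to iterate the commutative diagrams of Proposition~2.1 and Corollary~2.4 starting from $T(\omega)=(1,0)^T$, which is precisely what you do. Your explicit device of extracting $T(\mathbf{R}v)$ as the real part of $N\,T(v)$ for real $v$ (using $(\mathbf{R}+i\mathbf{S})v=\mathbf{R}v+i\mathbf{S}v$) is the natural way to make that iteration work, and all your trigonometric identities and real/imaginary part bookkeeping check out.
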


\section{Proof of the main results}

\begin{theorem} Let $\Omega$ be an open, bounded domain in $R^2$ with connected and $C^{\infty}$ boundary, and $\omega$ a solution of
\begin{eqnarray}
  -\triangle \omega &=& \mu \omega \\
  \frac{\partial \omega}{\partial n}|_{\partial \Omega} &=& 0\\
  \omega|_{\partial \Omega} &=& const \neq 0
\end{eqnarray}

\noindent for some $\mu > 0$. If $\mu < \mu_8(\Omega)$, then $\Omega$ must be a disk. \end{theorem}

\begin{proof} We assume that $\mu < \mu_8(\Omega)$ and $\Omega$ is not a disk, first note that $\frac{\partial \omega}{\partial x}, \frac{\partial \omega}{\partial y}$ satisfies

\begin{eqnarray}
-\triangle u = \mu u, \hspace{.1in} z \in \Omega \\
u|_{\partial \Omega} = 0.
\end{eqnarray}

\noindent thus $\mu$ is also a Dirichlet eigenvalue for the Laplacian on $\Omega$. $\mu$ can not be $\lambda_1(\Omega)$ since $\frac{\partial \omega}{\partial x}$ changes sign in $\Omega$, due to Lemma 2.5. We claim that

\begin{claim} If $\Omega$ is not a disk, then $\mu > \lambda_2(\Omega) $.

\end{claim}

\begin{proof} If $\Omega$ is not a disk and we have $\mu = \lambda_2(\Omega) $, then $\mu$ as the second Dirichlet eigenvalue of Laplacian  has algebraic multiplicity at least three, with eigenfunctions given by $\frac{\partial \omega}{\partial x}, \frac{\partial \omega}{\partial y}, \mathbf{R} \omega$ (these eigenfunctions are linearly independent if $\Omega$ is not a disk, see (\cite{Pa}) for a proof). For any $c_1, c_2, c_3 \in R, c_1^2 + c_2^2 + c_3^2 = 1$, $c_1 \frac{\partial \omega}{\partial x} + c_2 \frac{\partial \omega}{\partial y} + c_3 \mathbf{R} \omega$ has exactly two nodal domains in $\Omega$, due to the Courant's nodal domain theorem. Also by Green's formula, we note that

\begin{equation}\label{est}
\left \{
\begin{array}{ll}
& \displaystyle \int_{\partial \Omega} \frac{\partial (c_1 \frac{\partial \omega}{\partial x} + c_2 \frac{\partial \omega}{\partial y} + c_3 \mathbf{R} \omega)}{\partial n} \cdot 1 ds  = \int_{\partial \Omega} \frac{\partial (c_1 \frac{\partial \omega}{\partial x} + c_2 \frac{\partial \omega}{\partial y} + c_3 \mathbf{R} \omega)}{\partial n} \cdot \omega|_{\partial \Omega} ds \\
& = \displaystyle \int_{\partial \Omega}  (c_1 \frac{\partial \omega}{\partial x} + c_2 \frac{\partial \omega}{\partial y} + c_3 \mathbf{R} \omega) \cdot \frac{\partial \omega}{\partial n}|_{\partial \Omega} ds = 0,
\end{array}\right.
\end{equation}

\noindent which implies $\frac{\partial (c_1 \frac{\partial \omega}{\partial x} + c_2 \frac{\partial \omega}{\partial y} + c_3 \mathbf{R} \omega)}{\partial n}$ has at least two zeros along $\partial \Omega$.

Now we will literally follow the line of proof given in Theorem 2.3 of Lin(\cite{Lin_CS}). Fix two points $P$ and
$P_i$ on $\partial \Omega$; we can always choose three constants $C_i^1, C_i^2, C_i^3$ such that

\begin{equation}\label{circle}
(C_i^1)^2 + (C_i^2)^2 +(C_i^3)^2 = 1,
\end{equation}

\noindent and the linear combination $\phi_i = C_i^1 \frac{\partial \omega}{\partial x}+  C_i^2 \frac{\partial \omega}{\partial y} + C_i^3 \mathbf{R} \omega$ satisfies $\nabla \phi_i(P) = \nabla \phi_i(P_i) = 0$. Note that $P$ and $P_i$ must be the only two zeros of $\frac{\partial \phi_i}{\partial n}$ on $\partial \Omega$.

Taking $P_i \te P$, and
by (\ref{circle}), there is a subsequence of $\phi_i$ which converges to $\phi$, and obviously $\phi \neq 0$ is
a second (Dirichlet) eigenfunction, given by the linear combination of $\frac{\partial \omega}{\partial x}, \frac{\partial \omega}{\partial y}$ and $\mathbf{R}\omega$, that $\frac{\partial \phi}{\partial n}$ has only one sign on $\partial \Omega$. But this contradicts with equation (\ref{est})!

\end{proof}

From the Claim 3.2 above we have that $\mu \ge \lambda_3(\Omega)$. Denoting $u_1, u_2$ the first and second Dirichlet eigenfunctions of Laplacian on $\Omega$ (there might be more than one eigenfunctions associated with $\lambda_2(\Omega)$, in that case we will choose any nonzero one), we define two subspaces $$W_1 \stackrel{def}= span \{u_1, u_2, \frac{\partial \omega}{\partial x}, \frac{\partial \omega}{\partial y}, \mathbf{R}\omega \}, \hspace{.2in} W_2 \stackrel{def}= span \{ \omega_{xx}, \omega_{xy}, \omega_{yy} \} ,$$ we note that $W_1 \cap W_2 = \{0\}$, and all functions in $W_1$ satisfy the Dirichlet boundary condition.

The bilinear form $$B(\phi, \psi; \lambda) = \int\int_{\Omega} \frac{\partial \phi}{\partial x} \cdot  \frac{\partial \bar \psi}{\partial x} + \frac{\partial \phi}{\partial y} \cdot  \frac{\partial \bar\psi}{\partial y}  - \lambda \phi \cdot \bar\psi \hspace{.1in} dxdy, \hspace{.1in} \phi, \psi \in H^1(\Omega), \lambda \in R$$ has the following property:

\begin{claim} $B(\cdot, \cdot; \mu)|_{W_1 \oplus W_2}$ is semi-negative definite.

\end{claim}

\begin{proof} The bilinear form $B$ restricted to $W_1$ is semi-negative definite, since all functions in $W_1$ correspond to linear combination of Dirichlet eigenfunctions of Laplacian with corresponding eigenvalues less than or equal to $\mu$. Also note that for any $\phi \in W_1, \psi \in W_2$, we have

\begin{equation}
B(\phi, \psi; \mu)  = \int_{\partial \Omega} \phi \cdot \frac{\partial \bar \psi}{\partial n} ds = 0
\end{equation}

\noindent where we have used the fact that $\phi \in W_1$ satisfies the Dirichlet boundary condition and $\psi \in W_2$ satisfies $-\triangle \psi = \mu \psi$.

It now suffices to show that $B$ restricted to $W_2$ is semi-negative definite. For this one note that for any $\psi = c_1 \omega_{xx} + c_2 \omega_{xy} + c_3 \omega_{yy} \in W_2, c_1, c_2, c_3 \in \mathcal{C}$, we have

\begin{equation}\label{est_W2}
\left \{
\begin{array}{ll}
& B(\psi, \psi; \mu)   =  \int_{\partial \Omega} \psi \cdot \frac{\partial \bar \psi}{\partial n} ds \\
& = -\frac{1}{2} \cdot \int_0^{2\pi} \lcb c_1 \cos 2 \theta + c_2 \sin 2\theta - c_3 \cos 2\theta \rcb \cdot \overline{\lcb  c_1 \cos 2 \theta + c_2 \sin 2\theta - c_3 \cos 2\theta \rcb} d \theta \le 0
\end{array}\right.
\end{equation}
\noindent where in the last equality we have used Lemma 2.5 and noted that the last integration in (\ref{est_W2}) does not depend on the parametrization $\theta = \theta(s)$. Thus we have completed the proof of Claim 3.3.

\end{proof}

Since $dim (W_1 \oplus W_2) = 8$, semi-negative definiteness of $B(\cdot, \cdot; \mu)$ on $W_1 \oplus W_2$ implies that $\mu \ge \mu_8(\Omega)$, due to the minimax principle of Neumann eigenvalues of Laplacian. But it contradicts with the assumption that $\mu < \mu_8(\Omega)$!
\end{proof}

\begin{theorem} Let $\Omega$ be a strictly convex, centrally symmetric and bounded domain in $R^2$ with connected and $C^{\infty}$ boundary, and $\omega$ a solution of
\begin{eqnarray}
  -\triangle \omega &=& \mu \omega \\
  \frac{\partial \omega}{\partial n}|_{\partial \Omega} &=& 0\\
  \omega|_{\partial \Omega} &=& const \neq 0
\end{eqnarray}

\noindent for some $\mu > 0$. If $\mu < \mu_{13}(\Omega)$, then $\Omega$ must be a disk. \end{theorem}

\begin{proof} Assuming that $\mu < \mu_{13}(\Omega)$ and $\Omega$ is not a disk, then first we have the following

\begin{claim} If $\Omega$ is strictly convex, centrally symmetric and non-disk, then $\mu > \lambda_5(\Omega)$.

\end{claim}

\begin{proof} We are going to examine the nodal line structure of $\mathbf{R} \omega$ and show that $\mathbf{R}\omega$ has at least $6$ nodal domains in $\Omega$, then by the Courant nodal domain theorem, we must have $\mu > \lambda_5(\Omega)$.

\begin{definition}  $\mathbf{N} = {\{(x, y) \in \bar \Omega | \mathbf{R}\omega(x, y) = 0 \}}$ is called the nodal line of $\mathbf{R}\omega$. We call $(x, y) \in \mathbf{N}$ a \emph{node} if $\nabla \mathbf{R} \omega(x, y) = 0$, i.e. where the gradient of $\mathbf{R}\omega$ vanishes.
\end{definition}

As for the distribution of nodes in $\Omega$, we claim that the origin is a node. This is because $\Omega$ is centrally symmetric, by unique continuation property (UCP) it follows that $\omega$ is an even function, i.e. $\omega(-x, -y) = \omega(x, y)$ for $(x, y) \in \Omega$.  $\mathbf{R}\omega$ is necessarily an even function on $\Omega$. It then follows that the origin is a node, since $\mathbf{R}\omega(0)$ as well as  $\nabla \mathbf{R} \omega(0) $ vanishes.

To study the distribution of nodes along $\partial \Omega$, using Lemma 2.5 and 2.6 we note that

\begin{equation}
0 = \int_{\partial \Omega} \mathbf{R}\omega \frac{\partial \omega_{xx}}{\partial n}    ds= \int_{\partial \Omega} \frac{\partial \mathbf{R}\omega}{\partial n}  \omega_{xx}  ds = \int_0^L \frac{\partial \mathbf{R}\omega}{\partial n}(s) \cdot  \frac{1}{2}(\cos2 \theta(s)-1)  ds,
\end{equation}

\noindent which gives rise to

\begin{equation}\label{root_of_Rw}
0 = \int_0^L \frac{\partial \mathbf{R}\omega}{\partial n}(s) \cdot  \cos 2\theta(s)  ds.
\end{equation}

 Since $\Omega$ is strictly convex, which implies that $\kappa = -\frac{d\theta}{ds} < 0$ along $\partial \Omega$, we may rewrite (\ref{root_of_Rw}) as

\begin{equation}\label{root_of_Rw_theta}
 \int_0^{2\pi} \frac{\partial \mathbf{R}\omega}{\partial n} \cdot  \cos 2\theta \cdot \frac{1}{\kappa} d \theta = 0,
\end{equation}

\noindent where instead of $s$ we are using $\theta$ as independent variable along $\partial \Omega$. Similarly using $ \int_{\partial \Omega} \frac{\partial \mathbf{R}\omega}{\partial n}  \omega  ds = 0,  \int_{\partial \Omega} \frac{\partial \mathbf{R}\omega}{\partial n}  \omega_{xy}  ds = 0$ we obtain

\begin{eqnarray}\label{root_of_Rw_theta_sin}
 \int_0^{2\pi} \frac{\partial \mathbf{R}\omega}{\partial n} \cdot   \frac{1}{\kappa} d \theta = 0, \\
 \int_0^{2\pi} \frac{\partial \mathbf{R}\omega}{\partial n} \cdot  \sin 2\theta \cdot \frac{1}{\kappa} d \theta = 0,
\end{eqnarray}

Also we should note that

\begin{eqnarray}\label{root_of_Rw_theta_odd_mode}
\int_0^{2\pi} \frac{\partial \mathbf{R}\omega}{\partial n} \cdot  \sin \theta \cdot \frac{1}{\kappa} d \theta = \int_0^{2\pi} \frac{\partial \mathbf{R}\omega}{\partial n} \cdot  \cos \theta \cdot \frac{1}{\kappa} d \theta  = 0, \\\label{root_of_Rw_theta_odd_mode2}
\int_0^{2\pi} \frac{\partial \mathbf{R}\omega}{\partial n} \cdot  \sin 3\theta \cdot \frac{1}{\kappa} d \theta = \int_0^{2\pi} \frac{\partial \mathbf{R}\omega}{\partial n} \cdot  \cos 3\theta \cdot \frac{1}{\kappa} d \theta =0
\end{eqnarray}

\noindent since  $\frac{1}{\kappa}, \frac{\partial \mathbf{R}\omega}{\partial n}$ remains invariant as $\theta \te \theta + \pi$.

Combining equations (\ref{root_of_Rw_theta})- (\ref{root_of_Rw_theta_odd_mode2}), using the Sturm's theorem that any smooth function of $\theta$ orthogonal to $\sin k\theta, \cos k\theta, 0 \le k \le N$ must have at least $2(N+1)$ roots in one period $2\pi$ of $\theta$(see for example Arnold (\cite{Arnold})), we have at least $8$ zeros for $\frac{\partial \mathbf{R}\omega}{\partial n}$ along $\partial \Omega$(note that $\kappa < 0$ along $\partial \Omega$). Since $\mathbf{R}\omega$ satisfies the Dirichlet boundary condition, existence of at least $8$ zeros of $\frac{\partial \mathbf{R}\omega}{\partial n}$ along $\partial \Omega$ implies that there exist at least $8$ nodes of $\mathbf{R}\omega$ along $\partial \Omega$.

The set of nodes is linked together by what we call segments. To be precise, $E \subset \mathbf{N}$ is called a \emph{segment} if $E$ is connected, there is no node inside $E$, and $\partial E$ consist of nodes.

To estimate the number of segments inside $\bar \Omega$, we denote the number of nodes inside $\Omega$ as $n_1$, the number of nodes along $\partial \Omega$ as $n_2$, then by the discussion above we have that $n_1 \ge 1, n_2 \ge 8$.

For any node $(x_0, y_0) \in \mathbf{N}$, the local behavior of nodal line $\mathbf{N}$ near $(x_0, y_0)$ is given by $p_n(x, y) + O(r^{n+\epsilon}), n \ge 2,$ where $p_n(x, y)$ is the $n$-th order spherical harmonic polynomial in $R^2$ (see e.g. Yau(\cite{Yau}), thus for each node inside $\Omega$ locally there exist at least $4$ segments associated with that node, and for each node on $\partial \Omega$ locally there exist at least $3$ segments(inside $\bar \Omega$) associated with that node. Since each of the segments estimated above has been counted twice, the total number of segments inside $\bar \Omega$, denoted by $S$, can be estimated by

\begin{equation}\label{est_s}
S \ge \frac{4n_1 + 3 n_2}{2} = 2n_1 + \frac{3}{2}n_2
\end{equation}

\noindent We will now differentiate between the following two cases:

\begin{itemize}

\item \noindent Case I) If the nodal line $\mathbf{N}$ is connected, then according to Euler's formula, the number of nodal domains inside $\Omega$, denoted by $D$, is given by

\begin{equation}
D = 1 + S - (n_1 + n_2) \ge 1 + n_1 + \frac{1}{2}n_2 \ge 1 + 1 + 4 = 6
\end{equation}

\noindent using the estimate (\ref{est_s}) above. \bigskip

\item \noindent  Case II) If the nodal line $\mathbf{N}$ is not connected, then applying Euler's formula to each connected components of $\mathbf{N}$, we have that the number of nodal domains inside $\Omega$, denoted by $D$, can be estimated by

\begin{equation}
D = C + S - (n_1 + n_2) \ge 1 + n_1 + \frac{1}{2}n_2 \ge 1 + 1 + 4 = 6
\end{equation}

\noindent where $C$ is the number of components of $\mathbf{N}$ and we are using the estimate (\ref{est_s}) in the first inequality above.

\end{itemize}

Now that $\mathbf{R}\omega$ has at least $6$ nodal domains in $\Omega$, then by the Courant's nodal domain theorem, we must have $\mu > \lambda_5(\Omega)$.

\end{proof}

From now on we are going to follow the line of proof of Theorem 3.1. By Claim 3.5 above we have that $\mu \ge \lambda_6(\Omega)$. Denoting $u_1, u_2, u_3, u_4, u_5$ the first five Dirichlet eigenfunctions of Laplacian on $\Omega$ (there might be more than one eigenfunctions associated with each $\lambda_i(\Omega), 2 \le i \le 5$, in that case we will choose any nonzero one), we define two subspaces $$W_1 \stackrel{def}= span \{u_1, u_2, u_3, u_4, u_5, \frac{\partial \omega}{\partial x}, \frac{\partial \omega}{\partial y}, \mathbf{R}\omega \}, \hspace{.2in} W_2 \stackrel{def}= span \{ \omega_{xx}, \omega_{xy}, \omega_{yy}, \nabla\mathbf{R} \omega, \bar\nabla \mathbf{R}\omega \} $$ we note that $W_1 \cap W_2 = \{0\}$ and the bilinear form $$B(\phi, \psi; \lambda) = \int\int_{\Omega} \frac{\partial \phi}{\partial x} \cdot  \frac{\partial \bar \psi}{\partial x} + \frac{\partial \phi}{\partial y} \cdot  \frac{\partial \bar\psi}{\partial y}  - \lambda \phi \cdot \bar\psi \hspace{.1in} dxdy, \hspace{.1in} \phi, \psi \in H^1(\Omega), \lambda \in R$$ has the following property:

\begin{claim} $B(\cdot, \cdot; \mu)|_{W_1 \oplus W_2}$ is semi-negative definite.

\end{claim}

\begin{proof} The bilinear form $B$ restricted to $W_1$ is semi-negative definite, since all functions in $W_1$ correspond to linear combination of Dirichlet eigenfunctions of Laplacian on $\Omega$ with corresponding eigenvalues less than or equal to $\mu$. Also note that for any $\phi \in W_1, \psi \in W_2$, we have

\begin{equation}
B(\phi, \psi; \mu)  = \int_{\partial \Omega} \phi \cdot \frac{\partial \bar \psi}{\partial n} ds = 0
\end{equation}

\noindent where we have used the fact that $\phi \in W_1$ satisfies the Dirichlet boundary condition and $\psi \in W_2$ satisfies $-\triangle \psi = \mu \psi$.

It now suffices to show that $B$ restricted to $W_2$ is semi-negative definite. For this let $V_1 = span \{ \omega_{xx}, \omega_{xy}, \omega_{yy} \}, V_2 = span \{ \nabla \mathbf{R} \omega, \bar \nabla \mathbf{R} \omega \}$. One note that $B$ restricted to $V_1$ is semi-negative definite, as already shown in Claim 3.3.

 Since $\Omega$ is centrally symmetric, we have $\theta(s + \frac{L}{2}) = \theta(s) + \pi, s \in R$. In that case by Lemma 2.5 and 2.6 we have

\begin{equation}\label{rem}
\int_{\partial \Omega} \frac{\partial \omega_{xx}}{\partial n} \cdot \nabla\mathbf{R}\omega ds = \int_0^L  \kappa \cos 2 \theta(s) \cdot (-i)e^{i\theta}\cdot \frac{1}{2}\frac{dr^2}{ds}  ds = 0,
\end{equation}

\noindent since the transformation $s \te s + \frac{L}{2}, \theta \te \theta + \pi$ (while $\kappa$ and $ \frac{dr^2}{ds}$ remain invariant) will reverse the sign of the last integral in (\ref{rem}). Similarly we have

\begin{eqnarray}\label{rem2}
\int_{\partial \Omega} \frac{\partial \omega_{xy}}{\partial n}\cdot \nabla \mathbf{R}\omega ds =
 \int_{\partial \Omega} \frac{\partial \omega_{yy}}{\partial n}\cdot \nabla \mathbf{R}\omega ds = \\
 \int_{\partial \Omega} \frac{\partial \omega_{xx}}{\partial n}\cdot \bar\nabla\mathbf{R}\omega ds =
 \int_{\partial \Omega} \frac{\partial \omega_{xy}}{\partial n}\cdot \bar\nabla\mathbf{R}\omega ds =
 \int_{\partial \Omega} \frac{\partial \omega_{yy}}{\partial n}\cdot \bar\nabla\mathbf{R}\omega ds  = 0.
\end{eqnarray}

From the discussion above we have that for any $\phi = c_1 \omega_{xx} + c_2 \omega_{xy} + c_3 \omega_{yy} \in V_1, \psi = c_4 \nabla\mathbf{R}\omega + c_5 \bar\nabla \mathbf{R}\omega \in V_2, c_i \in \mathcal{C}, 1 \le i \le 5 $, we have

\begin{equation}
B(\phi, \psi; \mu)  = \int_{\partial \Omega}  \frac{\partial  \phi}{\partial n} \cdot \bar \psi ds  = 0
\end{equation}
\noindent where we have used equations (\ref{rem}) and (\ref{rem2}). Thus we have shown that $V_1, V_2$ are orthogonal to each other as far as the bilinear form $B(\cdot, \cdot; \mu)$ is concerned.

Finally we show that the bilinear form $B$ restricted to $V_2$ is semi-negative definite, which is given by utilizing Lemma 2.6 to obtain

\begin{eqnarray}
B(\nabla\mathbf{R} \omega, \nabla\mathbf{R} \omega; \mu) = B(\bar\nabla\mathbf{R} \omega, \bar\nabla\mathbf{R} \omega; \mu) = - \int_0^{2\pi} (\frac{1}{2} \frac{dr^2}{ds})^2 d \theta \le 0 \\
B(\bar\nabla \mathbf{R} \omega, \nabla\mathbf{R} \omega; \mu) = 0
\end{eqnarray}

\noindent Thus we have completed the proof of Claim 3.7.

\end{proof}

Since $dim (W_1 \oplus W_2) = 13$, semi-negative definiteness of $B(\cdot, \cdot; \mu)$ on $W_1 \oplus W_2$ implies that $\mu \ge \mu_{13}(\Omega)$, due to the minimax principle of Neumann eigenvalues of Laplacian. But it contradicts with the assumption that $\mu < \mu_{13}(\Omega)$!

\end{proof}

\emph{\begin{ackname}{ \emph{The author would like to express his gratitude to the anonymous referee for the numerous comments and suggestions that greatly improve the paper.}}\end{ackname}}

\bibliographystyle{amsplain}

\end{document}